\begin{document}

\date{}

\centerline{\bf }

\centerline{}

\centerline{}

\centerline {\Large{\bf Global Existence of Solutions for Some}}

\centerline{}

\centerline{\Large{\bf Coupled Systems of Reaction-Diffusion}}

\centerline{}

\centerline{\bf {Abdelmalek Salem}}

\centerline{}

\centerline{Department of mathematics}

\centerline{University of Tebessa, 12002 Algeria}

\centerline{a.salem@gawab.com}

\centerline{}

\centerline{\bf {Youkana Amar}}

\centerline{}

\centerline{Department of Mathematics}

\centerline{University of Batna, 5000 Algeria}

\centerline{youkana-amar@yahoo.fr}

\begin{abstract}
The aim of this work is to study the global existence of solutions for some
coupled systems of reaction diffusion which describe the spread within a
population of infectious disease. We consider a triangular matrix diffusion
and we show that we can prove global existence of classical solutions for
the nonlinearities of weakly exponential growth.
\end{abstract}

\newtheorem{Theorem}{\quad Theorem}[section]

\newtheorem{Definition}[Theorem]{\quad Definition}

\newtheorem{Corollary}[Theorem]{\quad Corollary}

\newtheorem{Lemma}[Theorem]{\quad Lemma}

\newtheorem{Example}[Theorem]{\quad Example}

{\bf Mathematics Subject Classification:} 35K57, 35K57 \\

{\bf Keywords:} Reaction-Diffusion systems, Global Existence, Lyapunov
Functional

\section{Introduction}

In this work we shall be concerned with a reaction-diffusion system of the
form:%
\begin{equation}
\left\{
\begin{array}{l}
\frac{du}{dt}-a\Delta u=\Lambda -\lambda \left( t\right) f(u,v)-\mu
u\,\,\,\,\ \ \ \ \ \ \ \ \ \ \ \text{in }%
\mathbb{R}
^{+}\times \Omega \\
\frac{dv}{dt}-b\Delta u-d\Delta v=\lambda \left( t\right) f(u,v)-\mu v\,\,\
\ \ \,\ \ \ \ \ \text{in }%
\mathbb{R}
^{+}\times \Omega%
\end{array}%
\right.  \tag{1.1}
\end{equation}%
with boundary conditions%
\begin{equation}
\frac{\partial u}{\partial \eta }=\frac{\partial v}{\partial \eta }%
=0\,\,\,\,\,\text{on\ }%
\mathbb{R}
^{+}\times \partial \Omega  \tag{1.2}
\end{equation}%
and the nonnegative continuous and bounded initial data
\begin{equation}
u\left( 0,x\right) =u_{0}\left( x\right) ,\,\,\,\,\,\,\,\,v\left( 0,x\right)
=v_{0}\left( x\right) \,\ \ \text{in }\overset{-}{\Omega }  \tag{1.3}
\end{equation}%
where\ $\Omega $\ is a bounded domain of class $C^{1}$ in\ $%
\mathbb{R}
^{n},$ with boundary $\partial \Omega ,\,\frac{\partial }{\partial \eta }$
is the outward normal derivative to $\partial \Omega $.

The constants $a,b,d,\ \Lambda ,\ \mu $ are such that%
\begin{equation}
a>0,\text{ \ \ }b>0,\ \ d-a\geq b,\ \ \ \mu
>0,\text{ \ and \ }\Lambda \geq 0\text{.}  \tag{H.1}
\end{equation}

We assume that $t\rightarrow \lambda \left( t\right) $ is a nonnegative and
bounded function in $C\left(
\mathbb{R}
^{+}\right) $ with $0\leq \lambda \left( t\right) \leq \widehat{\lambda }$\
and nonlinearity $f\,$\ is a nonnegative continuously differentiable
function on $\left( 0,+\infty \right) \times \left( 0,+\infty \right) \ $%
satisfying%
\begin{equation}
f\left( 0,\eta \right) =0\ \ \text{for\ all}\ \eta \ \ \text{in}\
\mathbb{R}
^{+}\ \text{and}\ \underset{\eta \rightarrow +\infty }{\lim }\frac{\log
(1+f(.,\eta ))}{\eta }=0\text{.}  \tag{H.2}
\end{equation}

The reaction-diffusion system $(1.1)-(1.3)$ may be viewed as a diffusive
epidemic model where $u$ and $v$ represent the nondimensional population
densities of susceptibles and invectives, respectively. We can \ consider
the system $(1.1)-(1.3)$ as a model describing the spread of an infection
disease (such as AIDS for instance) within a population assumed to be
divided into the susceptible and infective classes as precised (for further
motivation see for instance \cite{BC, CCH} and the references therein).

We note that in the case $\Lambda =\mu =0,$ this problem corresponds to the
problem of R. H.Martin and much works have been done in the literature and
positive answers have been under different forms for subgrowth nonlinearity
(se for instance \cite{A, MA, HY, HMP, K, M}).

We mentioned that in the case $\Lambda >0,~$it is not obvious to obtain
global existence of solutions when the nonlinearity are at most polynomial
growth and the methods as above can not be applied.

Our main purpose is to study the global existence of solutions of the system
$(1.1)-(1.3)$ with nonlinearities of weakly exponential growth.

\section{Preliminaries results}

\subsection{Local existence of solutions.}

We denote by $W^{m,p}(\Omega )$, the Sobolev space of order $m\geq 0$\ for\ $%
1\leq p\leq +\infty \ $and$\ $by$\ C(\overset{-}{\Omega })$ the Banach space
of continuous~functions on $\overset{-}{\Omega }$.

We can convert equations $(1.1)-(1.3)$ to an abstract first order system in
the Banach space $X=C(\overset{-}{\Omega })\times C(\overset{-}{\Omega })$\
of the form%
\begin{equation*}
\left\{
\begin{array}{c}
\frac{dU(t)}{dt}=\breve{A}U(t)+F(U(t)) \\
U(0)=U_{0}\in X%
\end{array}%
\ \ \ t>0,\right.
\end{equation*}%
where%
\begin{equation*}
\tilde{A}:D_{\infty }(A)\times D_{\infty }(B)\rightarrow X,
\end{equation*}%
with%
\begin{equation*}
D_{\infty }(A)=D_{\infty }(B)=\left\{ z\in W^{2,p}(\Omega )\ \text{for all }%
p>n,\ \Delta z\in C(\overset{-}{\Omega }),\ \frac{\partial z}{\partial \eta }%
=0\right\} ,
\end{equation*}%
\begin{equation*}
\tilde{A}U=(Au,Bv)
\end{equation*}%
and%
\begin{equation*}
F(U)=(\Lambda -\lambda f\left( u,v\right) ;\lambda f\left( u,v\right) )\text{%
.}
\end{equation*}

It is clear that from the general theory of semigroup we deduce the
existence of an unique local classical solution in some interval $]0,T^{\ast
}[$, where $T^{\ast }$ is the eventual blowing-up time in $L^{\infty
}(\Omega )$ (see for example Henry \cite{H} or Pazy \cite{P}).

\subsection{Positivity of solutions.}

From the nonnegativity of the initial data $u_{0}$ on $\Omega ,$ one~easily
deduce from the maximum applied to the first equation of $(1.1)$ that the
component $u\ $remains nonnegative and bounded on ($0,T^{\ast })\times
\Omega $ so that
\begin{equation*}
0\leq u(t,x)\leq K:=\max (\left\Vert u_{0}\right\Vert _{\infty },\frac{%
\Lambda }{\mu })\text{.}
\end{equation*}

In order to obtain the positivity of $v$\ we assume that%
\begin{equation}
\left\Vert u_{0}\right\Vert _{\infty }\leq \frac{\Lambda }{\mu }\text{.}
\tag{H.3}
\end{equation}

\begin{lemma}
Let $(u,v)$\ be the solution of $(1.1)-(1.3)$.\newline
If\ the initial data $v_{0}$ satisfies the condition%
\[
v_{0}\geq \frac{b}{d-a}(\frac{\Lambda }{\mu }-\left\Vert u_{0}\right\Vert
_{\infty })
\]%
\ then for\ all$\ (t,x)\in (0,T^{\ast })\times \Omega \ $we\ have
\[
v(t,x)\geq \frac{b}{d-a}(\frac{\Lambda }{\mu }-u(t,x))\text{.}
\]
\end{lemma}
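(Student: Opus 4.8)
The plan is to reduce the claimed lower bound to a one-sided maximum principle for a single scalar parabolic equation. Set $\alpha=\frac{b}{d-a}$, which is well defined and positive because $d-a\geq b>0$ by (H.1), and introduce the comparison function
\[
w(t,x)=v(t,x)-\alpha\Big(\frac{\Lambda}{\mu}-u(t,x)\Big)=v+\alpha u-\alpha\frac{\Lambda}{\mu}.
\]
Proving the Lemma is exactly proving $w\geq 0$ on $(0,T^{\ast})\times\Omega$. Since $u$ and $v$ are classical solutions on $(0,T^{\ast})$, so is $w$, and the whole question becomes: identify the scalar equation $w$ satisfies, and check the hypotheses of a maximum principle for it.

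Next I would compute $\partial_t w=\partial_t v+\alpha\,\partial_t u$ and substitute the two equations of $(1.1)$. The crucial feature is the choice of $\alpha$. The second-order part is $b\Delta u+d\Delta v+\alpha a\Delta u=(b+\alpha a)\Delta u+d\Delta v$, and the relation $\alpha(d-a)=b$ is precisely what gives $b+\alpha a=\alpha d$, so this collapses into $d\Delta w$. For the zeroth-order part one finds $-\mu v-\alpha\mu u+\alpha\Lambda=-\mu(v+\alpha u)+\alpha\Lambda=-\mu w$, using $v+\alpha u=w+\alpha\frac{\Lambda}{\mu}$. The reaction term $\lambda f$ survives with coefficient $(1-\alpha)$, so altogether
\[
\partial_t w-d\Delta w+\mu w=(1-\alpha)\,\lambda(t)\,f(u,v).
\]

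It then remains to verify the three ingredients of the maximum principle for the operator $\partial_t-d\Delta+\mu$ (note $\mu>0$). First, the right-hand side is nonnegative: $\lambda\geq0$ and $f\geq0$, while $1-\alpha=1-\frac{b}{d-a}\geq0$ is exactly the content of the structural assumption $d-a\geq b$ in (H.1); this is the step that genuinely uses the triangular structure and is the heart of the argument. Second, $w$ inherits the homogeneous Neumann condition $\frac{\partial w}{\partial\eta}=\frac{\partial v}{\partial\eta}+\alpha\frac{\partial u}{\partial\eta}=0$ from $(1.2)$. Third, the initial value $w(0,\cdot)=v_0+\alpha u_0-\alpha\frac{\Lambda}{\mu}$ must be nonnegative, which is where the hypothesis on $v_0$ enters. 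With these, the parabolic minimum principle forbids a negative minimum: at an interior point $(t_0,x_0)$ with $t_0>0$ such a minimum would force $\partial_t w\leq0$, $\Delta w\geq0$ (hence $-d\Delta w\leq0$) and $\mu w<0$, so that $\partial_t w-d\Delta w+\mu w<0$, contradicting the nonnegative source; the lateral boundary is excluded by the Neumann condition together with Hopf's lemma, and $t=0$ by the initial data. Hence $w\geq0$, which is the assertion.

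The point I expect to be delicate is the initial-data bookkeeping: one needs $w(0,\cdot)\geq0$, i.e. $v_0\geq\alpha(\frac{\Lambda}{\mu}-u_0)$, and I would check carefully how this is secured by the stated bound on $v_0$ (recalling that $0\leq u\leq\frac{\Lambda}{\mu}$ under (H.3), so the term $\frac{\Lambda}{\mu}-u$ is controlled). A second, more technical caveat is that $f(u,v)\geq0$ presupposes $(u,v)$ remains in the domain of $f$; since the very lower bound being proved, combined with $u\leq\frac{\Lambda}{\mu}$, yields $v\geq0$, this is handled by the usual invariant-region and continuity argument rather than being a true obstruction.
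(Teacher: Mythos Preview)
Your approach is exactly the paper's: define $w=v-\frac{b}{d-a}\bigl(\frac{\Lambda}{\mu}-u\bigr)$, verify that $w$ satisfies $\partial_t w-d\Delta w=(1-\frac{b}{d-a})\lambda f(u,v)-\mu w$ with homogeneous Neumann data and $w_0\geq 0$, and conclude $w\geq 0$ by the maximum principle. You supply more computational detail and an explicit discussion of the minimum-principle argument, and you rightly flag the initial-data bookkeeping as the delicate point; the paper simply asserts $w_0\geq 0$ without further comment.
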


\begin{proof}
Let us consider $w=v-\frac{b}{d-a}(\frac{\Lambda }{\mu }-u)~$and in this way
the system $(1.1)-(1.3)$ may be equivalent to the system%
\begin{equation}
\left\{
\begin{array}{l}
\frac{du}{dt}-a\Delta u=\Lambda -\lambda f(u,v)-\mu u \\
\frac{dw}{dt}-d\Delta w=(1-\frac{b}{d-a})\lambda f(u,v)-\mu w%
\end{array}%
\right. \text{in }%
\mathbb{R}
^{+}\times \Omega  \tag{2.1}
\end{equation}%
with%
\begin{equation}
\left\{
\begin{array}{c}
u(0,.)=u_{0}(.)\geq 0 \\
w(0,.)=w_{0}(.)\geq 0%
\end{array}%
\right. \text{in}\ \Omega \text{.}  \tag{2.2}
\end{equation}

Using $(H.1)$ and maximum principle to the second equation of the system $%
(2.1)$ we obtain%
\[
w(t,x)\geq 0\ \forall (t,x)\in (0,T^{\ast })\times \Omega
\]%
that means%
\begin{equation}
v(t,x)\geq \frac{b}{d-a}(\frac{\Lambda }{\mu }-u(t,x))\geq 0\ \text{for\ all}%
\ (t,x)\in (0,T^{\ast })\times \Omega \text{.}  \tag{2.3}
\end{equation}
\end{proof}

Now we are able to state our main result.

\section{Statement and proof of the main results}

Using the idea of Haraux and Youkana \cite{HY}, let us consider the
functional%
\begin{equation*}
J(t)=\int_{\Omega }\left( 1+\delta (1+u+u^{2}\right) e^{\varepsilon w}dx
\end{equation*}

where\ $\delta \ $and $\varepsilon \ $are constants such that%
\begin{equation}
0<\delta \leq \min \left( \frac{\mu }{2\Lambda \left( 1+2K\right) }%
,2\left( \frac{2\sqrt{ab}}{a+b}\cdot \frac{1}{1+2K}\right) ^{2}\right)
\tag{3.1}
\end{equation}%
and%
\begin{equation}
0<\varepsilon \leq \frac{\delta }{1+\delta \left( K+K^{2}\right) }\min
\left( 1,\frac{d-a}{b}\right) \text{.}  \tag{3.2}
\end{equation}

Our main results are as follows:

\begin{theorem}
Let\thinspace $\ \left( u,v\right) \,$be a solution of $\left( 1.1\right)
-\left( 1.3\right) $\ on $(0,T^{\ast }),\,$then there exist a positive
constant $\gamma \,$such that\thinspace \thinspace \thinspace for all
\thinspace $t\in (0,T^{\ast })\ $the\ functional%
\begin{equation}
J(t)=\int_{\Omega }\left( 1+\delta (1+u+u^{2}\right) e^{\varepsilon
v}dx \tag{3.3}
\end{equation}

satisfies the\ relation%
\begin{equation}
\frac{d}{dt}J(t)\leq -\frac{\mu }{2}J(t)+\gamma \text{.}  \tag{3.4}
\end{equation}
\end{theorem}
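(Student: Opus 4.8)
The plan is to differentiate $J(t)$ under the integral sign and use the evolution equations from $(1.1)$ to express everything in terms of $u$, $v$, their gradients, and the nonlinearity $f$. Writing $\varphi(u)=1+\delta(1+u+u^2)$ so that $J(t)=\int_\Omega \varphi(u)e^{\varepsilon v}\,dx$, I would compute
\begin{equation*}
\frac{d}{dt}J(t)=\int_\Omega \left(\varphi'(u)u_t e^{\varepsilon v}+\varepsilon\varphi(u)e^{\varepsilon v}v_t\right)dx,
\end{equation*}
and then substitute $u_t=a\Delta u+\Lambda-\lambda f-\mu u$ and $v_t=b\Delta u+d\Delta v+\lambda f-\mu v$ from $(1.1)$. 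The first main step is to integrate the diffusion terms by parts, using the homogeneous Neumann conditions $(1.2)$ to kill the boundary integrals. This produces a quadratic form in $\nabla u$ and $\nabla v$ with coefficients depending on $\varphi',\varphi'',\varphi$ and on $a,b,d,\varepsilon$.

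The crucial step is to arrange that this quadratic form in $(\nabla u,\nabla v)$ is nonnegative, so that after integrating by parts the gradient contribution to $\frac{d}{dt}J$ is $\le 0$ and can be discarded. This is exactly where the constraints $(3.1)$ and $(3.2)$ enter: the discriminant condition for a quadratic $A|\nabla u|^2+2B\,\nabla u\cdot\nabla v+C|\nabla v|^2$ to have the right sign forces a bound relating $\delta$, $\varepsilon$, and the diffusion constants, and the factor $\tfrac{2\sqrt{ab}}{a+b}$ appearing in $(3.1)$ together with the $\min(1,\tfrac{d-a}{b})$ in $(3.2)$ are precisely the algebraic residue of completing the square and imposing $B^2\le AC$. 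I expect this gradient-sign verification to be the main obstacle, since it requires carefully tracking the second-derivative term $\varphi''(u)=2\delta$ and checking that the chosen $\delta,\varepsilon$ indeed render the form definite; hypothesis $(H.1)$, specifically $d-a\ge b$, is what makes this feasible.

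Once the gradient terms are disposed of, what remains is a pointwise reaction integrand of the form $\bigl[\varphi'(u)(\Lambda-\lambda f-\mu u)+\varepsilon\varphi(u)(\lambda f-\mu v)\bigr]e^{\varepsilon v}$. I would group the $\lambda f$ contributions and show, using $(3.2)$ and the bound $0\le u\le K$, that the coefficient $\bigl(\varepsilon\varphi(u)-\varphi'(u)\bigr)\lambda f$ is controlled; the definition of $\varepsilon$ in $(3.2)$ is tuned so that $\varepsilon\varphi(u)-\varphi'(u)\le 0$ on $[0,K]$, which lets me drop or absorb the $f$-terms. The boundedness of $\lambda$ by $\widehat\lambda$ and of $u$ by $K$ keeps all $u$-dependent factors uniformly bounded.

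The final step is to extract the dissipation $-\tfrac{\mu}{2}J(t)$ and a constant $\gamma$. The $-\mu u\varphi'(u)$ and $-\mu v\,\varepsilon\varphi(u)$ terms supply the needed decay; matching them against $-\tfrac{\mu}{2}\varphi(u)e^{\varepsilon v}$ leaves a remainder that is bounded above uniformly in $(t,x)$ because $\varphi(u)$, $\varphi'(u)$, and the algebraic combination $-\tfrac{\mu}{2}\varphi(u)+\Lambda\varphi'(u)$ are all bounded on $u\in[0,K]$ — here the first bound in $(3.1)$, namely $\delta\le\tfrac{\mu}{2\Lambda(1+2K)}$, is what guarantees $\Lambda\varphi'(u)\le\tfrac{\mu}{2}\varphi(u)$ pointwise. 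Integrating the bounded remainder over the finite-measure domain $\Omega$ yields the constant $\gamma=C|\Omega|$, giving $(3.4)$. I would present the proof by first stating the differentiated identity, then the gradient lemma (quadratic form nonnegative), then the pointwise reaction estimate, and finally the integration to conclude.
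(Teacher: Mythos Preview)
Your plan is essentially the paper's own argument: differentiate $J$, integrate the diffusion terms by parts using the Neumann conditions, show the resulting quadratic form in $(\nabla u,\nabla v)$ is nonnegative via a discriminant check driven by $(3.1)$--$(3.2)$, kill the $f$-contribution using $\varepsilon\varphi(u)\le\varphi'(u)$ on $[0,K]$, and then extract $-\tfrac{\mu}{2}J$ from the remaining reaction terms. The role you assign to each constraint on $\delta$ and $\varepsilon$ matches the paper.

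There is, however, one genuine gap in your final step. After disposing of the gradient and $f$-terms, the surviving reaction integrand is
\[
\bigl[\Lambda\varphi'(u)-\mu u\,\varphi'(u)-\mu\varepsilon v\,\varphi(u)\bigr]e^{\varepsilon v},
\]
and you want this bounded above by $-\tfrac{\mu}{2}\varphi(u)e^{\varepsilon v}+C$. You justify the uniform bound on the remainder by saying that $\varphi(u)$, $\varphi'(u)$, and $-\tfrac{\mu}{2}\varphi(u)+\Lambda\varphi'(u)$ are bounded on $u\in[0,K]$; but every term still carries the factor $e^{\varepsilon v}$, and $v$ is \emph{not} known to be bounded at this stage. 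Boundedness in $u$ alone does not close the estimate. The paper handles this by adding and subtracting $\mu\varphi(u)e^{\varepsilon v}$: the piece $-\mu\varphi(u)e^{\varepsilon v}$ is grouped with $\Lambda\varphi'(u)e^{\varepsilon v}-\mu u\varphi'(u)e^{\varepsilon v}$ to produce $\le-\tfrac{\mu}{2}J$ (this is where $\delta\le\tfrac{\mu}{2\Lambda(1+2K)}$ is used, as you noted), while the leftover in $v$ becomes $\mu\varphi(u)(1-\varepsilon v)e^{\varepsilon v}$. The key elementary observation you are missing is that $\eta\mapsto(1-\varepsilon\eta)e^{\varepsilon\eta}$ is nonincreasing on $[0,\infty)$ with maximum $1$, whence this term is $\le\mu(1+\delta(K+K^{2}))$ pointwise and integrates to the constant $\gamma$. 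Without this (or an equivalent device to absorb $e^{\varepsilon v}$), your remainder is not uniformly bounded and the inequality $(3.4)$ does not follow.
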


\begin{corollary}
Under the hypothesis $(H.1)-(H.3)$, the solutions of the parabolic system $%
(1.1)-(1.3)$\ with nonnegative and bounded initial data $u_{0},v_{0}$ are
global and uniformly bounded in $(0,+\infty )\times \Omega $.
\end{corollary}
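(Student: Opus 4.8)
The plan is to treat the differential inequality $(3.4)$ of the Theorem as a Lyapunov estimate, use it to get a time-independent $L^{1}$ bound on $e^{\varepsilon v}$, and then upgrade this to a uniform $L^{\infty}$ bound by a parabolic bootstrap; once $(u,v)$ is bounded in $L^{\infty}(\Omega)$ uniformly in $t$, global existence follows from the characterization of $T^{\ast}$ as the $L^{\infty}$ blow-up time. Concretely, since the initial data are bounded we have $J(0)<\infty$, and integrating $\frac{d}{dt}J(t)\le-\frac{\mu}{2}J(t)+\gamma$ (multiply by $e^{\mu t/2}$ and integrate, i.e. Gronwall) gives $J(t)\le J(0)e^{-\mu t/2}+\frac{2\gamma}{\mu}\bigl(1-e^{-\mu t/2}\bigr)\le\max\!\left(J(0),\frac{2\gamma}{\mu}\right)=:C_{0}$ for all $t\in(0,T^{\ast})$, a bound independent of $t$. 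Because $1+\delta(1+u+u^{2})\ge1$, this yields $\int_{\Omega}e^{\varepsilon v(t,x)}\,dx\le C_{0}$ uniformly in $t$.

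Next I would extract $L^{p}$ bounds. From $e^{\varepsilon v}\ge\frac{(\varepsilon v)^{p}}{p!}$ for $v\ge0$, the uniform control of $\int_{\Omega}e^{\varepsilon v}$ gives a uniform-in-$t$ bound on $\|v(t,\cdot)\|_{L^{p}(\Omega)}$ for every finite $p$, while $u$ already satisfies $0\le u\le K$. Here $(H.2)$ enters decisively: for each fixed $q$ choose $\sigma=\varepsilon/q$, so that weakly exponential growth furnishes a constant $C_{\sigma}$ with $f(u,v)\le C_{\sigma}e^{\sigma v}$ uniformly for $u\in[0,K]$; then $\int_{\Omega}f^{q}\le C_{\sigma}^{q}\int_{\Omega}e^{q\sigma v}=C_{\sigma}^{q}\int_{\Omega}e^{\varepsilon v}\le C_{\sigma}^{q}C_{0}$. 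Thus $f(u(t,\cdot),v(t,\cdot))$ is bounded in $L^{q}(\Omega)$ for every $q<\infty$, uniformly in $t$, despite the exponential weight.

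To reach $L^{\infty}$ I would pass to the diagonalized variable $w=v-\frac{b}{d-a}\bigl(\frac{\Lambda}{\mu}-u\bigr)$ of $(2.1)$, whose equation $w_{t}-d\Delta w=\bigl(1-\frac{b}{d-a}\bigr)\lambda f-\mu w$ carries no cross-diffusion. Its right-hand side is bounded in $L^{q}(\Omega)$ for all $q$, uniformly in $t$, so I would write $w$ via the variation-of-constants formula for the Neumann heat semigroup $e^{td\Delta}$ and invoke the $L^{q}$--$L^{\infty}$ smoothing estimates of this analytic semigroup together with the dissipative term $-\mu w$ to bound $\|w(t,\cdot)\|_{L^{\infty}(\Omega)}$ uniformly on $(0,T^{\ast})$ (taking $q$ large, so that finitely many smoothing steps suffice). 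Since $u$ is bounded, $v=w+\frac{b}{d-a}\bigl(\frac{\Lambda}{\mu}-u\bigr)$ is then bounded in $L^{\infty}(\Omega)$ uniformly in $t$.

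Finally, $\|u\|_{\infty}\le K$ together with the uniform $L^{\infty}$ bound on $v$ shows that $(u,v)$ cannot blow up in $L^{\infty}(\Omega)$ as $t\uparrow T^{\ast}$; as $T^{\ast}$ is exactly the $L^{\infty}$ blow-up time from the local existence theory, this forces $T^{\ast}=+\infty$ and gives the global solution with a uniform bound on $(0,+\infty)\times\Omega$. The main obstacle is this last bootstrap: making the passage from bounds in every $L^{p}$ to an $L^{\infty}$ bound uniform in $t$ (not merely on compact time intervals) while correctly accommodating the triangular diffusion. The substitution $w$ removes the cross-diffusion and the time-independent constant $C_{0}$ keeps every estimate uniform, while $(H.2)$ is precisely what keeps $f$ bounded in every $L^{q}$ and is indispensable at that point.
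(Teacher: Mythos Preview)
Your argument is correct and follows the same route as the paper: integrate the differential inequality $(3.4)$ to bound $J(t)$ uniformly, use $(H.2)$ to place $f(u,v)$ in $L^{p}(\Omega)$ uniformly in time, and then invoke parabolic smoothing to reach an $L^{\infty}$ bound and hence global existence. The paper's own proof is terser---it records only the $L^{n}(\Omega)$ bound on $f$ and cites the regularizing effect of the heat equation without explicitly passing to the diagonalized variable $w$---so your explicit handling of the cross-diffusion via $w$ and the uniform-in-$t$ semigroup estimate simply fill in details the paper leaves to the reader.
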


\begin{proof}[Proof of the theorem]
By simple use of Green's formula and from equations $(1.1)-(1.3)$ we have
for all $t\in (0,T^{\ast })$%
\[
\frac{d}{dt}J(t)=G+H_{1}+H_{2}+H_{3}
\]%
where%
\[
\left.
\begin{array}{l}
G=-\delta \int_{%
\Omega
}\left[ 2a+b\varepsilon \left( 1+2u\right) \right] e^{\varepsilon
v}\left\vert \nabla u\right\vert ^{2}dx \\
\text{ \ \ \ \ \ \ }-\varepsilon \int_{%
\Omega
}\left[ \left( a+d\right) \delta \left( 1+2u\right) +b\varepsilon
\left( 1+\delta \left( u+u^{2}\right) \right) \right] e^{\varepsilon
v}\nabla
u\nabla vdx \\
\text{ \ \ \ \ \ \ }-d\varepsilon ^{2}\int_{%
\Omega
}\left[ 1+\delta \left( u+u^{2}\right) \right] e^{\varepsilon
v}\left\vert
\nabla v\right\vert ^{2}dx%
\end{array}%
\right.
\]%
and%
\[
\left.
\begin{array}{l}
H_{1}=\int_{\Omega }(\Lambda \frac{\delta (1+2u)}{1+\delta (u+u^{2})}-\mu u%
\frac{\delta (1+2u)}{1+\delta (u+u^{2})}-\mu )(1+\delta
(u+u^{2})e^{\varepsilon v}dx \\
H_{2}=\int_{\Omega }\left( \varepsilon -\frac{\delta \left( 1+2u\right) }{%
1+\delta \left( u+u^{2}\right) }\right) \lambda f\left( u,v\right) \left(
1+\delta \left( u+u^{2}\right) \right) e^{\varepsilon v}dx \\
H_{3}=\mu \int_{\Omega }(1-\varepsilon v)e^{\varepsilon v}(1+\delta
(u+u^{2})dx\text{.}%
\end{array}%
\right.
\]

We observe that G involves a quadratic form with respect to $\nabla u$ and $%
\nabla v$%
\[
\left.
\begin{array}{l}
Q=\delta \left( 2a+b\varepsilon \left( 1+2u\right) \right) e^{\varepsilon
v}\left\vert \nabla u\right\vert ^{2} \\
\text{ \ \ \ \ \ \ }+\varepsilon \left[ \left( a+d\right) \delta \left(
1+2u\right) +b\varepsilon \left( 1+\delta \left( u+u^{2}\right) \right) %
\right] e^{\varepsilon v}\nabla u\nabla v \\
\text{ \ \ \ \ \ \ }+d\varepsilon ^{2}\left( 1+\delta \left( u+u^{2}\right)
\right) e^{\varepsilon v}\left\vert \nabla v\right\vert ^{2}%
\end{array}%
\right.
\]%
and the discriminant%
\[
\left.
\begin{array}{c}
D=\left[ \varepsilon \left[ \left( a+d\right) \delta \left( 1+2u\right)
+b\varepsilon \left( 1+\delta \left( u+u^{2}\right) \right) \right] \right]
^{2}- \\
\text{ \ \ \ \ }4\left[ \delta \left( 2a+b\varepsilon \left( 1+2u\right)
\right) \right] \left[ d\varepsilon ^{2}\left( 1+\delta \left(
u+u^{2}\right) \right) \right] ,%
\end{array}%
\right.
\]

may be non-positive since the constants $\delta $ and $\varepsilon $
satisfies $(3.1)\ $and$\ (3.2)$.

Consequently%
\[
G\leq 0\ \text{for all }t\ \text{\ in }\left( 0,T^{\ast }\right) \text{.}
\]

Concerning the terms $H_{i},\ i=1,2,3\ $we have\ again from $(3.1)-(3.2)$
where now%
\begin{equation}
0<\delta \leq \frac{\mu }{2\Lambda \left( 1+2K\right) },\ \ 0<\varepsilon
\leq \frac{\delta }{1+\delta \left( K+K^{2}\right) },  \tag{3.5}
\end{equation}%
one checks that%
\begin{equation}
\Lambda \frac{\delta \left( 1+2u\right) }{1+\delta \left( u+u^{2}\right) }%
-\mu u\frac{\delta \left( 1+2u\right) }{1+\delta \left( u+u^{2}\right) }-\mu
\leq 2\Lambda \delta \left( 1+2K\right) -\mu \leq -\frac{\mu }{2},  \tag{3.6}
\end{equation}%
and
\begin{equation}
\varepsilon -\frac{\delta \left( 1+2u\right) }{1+\delta \left(
u+u^{2}\right) }\leq \varepsilon -\frac{\delta }{1+\delta \left(
K+K^{2}\right) }\leq 0,  \tag{3.7}
\end{equation}%
from we deduce%
\[
H_{1}\leq -\frac{\mu }{2}J(t)
\]%
and%
\[
H_{2}\leq 0\text{.}
\]

Now for the term $H_{3},$one observe that the function%
\[
\pi :\eta \rightarrow \left( 1-\varepsilon \eta \right) e^{\varepsilon \eta
},
\]%
is bounded on $%
\mathbb{R}
^{+}$. Indeed, one has%
\[
\frac{d\pi }{d\eta }\left( \eta \right) =-\varepsilon ^{2}\eta
e^{\varepsilon \eta }\leq 0,
\]%
so that $\pi $ is nonincreasing in $[0,+\infty \lbrack $ and%
\[
\underset{\eta \geq 0}{\max }\left( 1-\varepsilon \eta \right)
e^{\varepsilon \eta }=1\text{.}
\]

In this way one can deduce that there a positive constant $\gamma $%
\[
\gamma =\mu \left( 1+\delta \left( K+K^{2}\right) \right) \left\vert \Omega
\right\vert
\]%
such that
\begin{equation}
H_{3}\leq \gamma \text{.}  \tag{3.8}
\end{equation}%
Aditioning~$G,$ $H_{1},\ H_{2}\ $and$\ H_{3}$ we get%
\begin{equation}
\frac{d}{dt}J(t)=G+H_{1}+H_{2}+H_{3}\leq -\frac{\mu }{2}J\left(
t\right) +\gamma \text{.}  \tag{3.9}
\end{equation}%
Thus, the proof of theorem is complete.
\end{proof}

Now we are able to prove the global existence for the solutions of $%
(1.1)-(1.3)$.

\begin{proof}[Proof of the corollary]
From $(H.2)$ we see that there exist a positive consant $C$\ such that%
\begin{equation}
1+f(.,v)\leq Ce^{\frac{v}{n}},\ \forall v\geq 0.  \tag{3.10}
\end{equation}%
where $\varepsilon \ $\ is chosen\ as\ in\ $(3.2)$. By virtue of $(3.9)$, it
is seen that%
\[
J(t)\leq C,\ \ \ \ \ \ \ \forall t\in (0,T^{\ast })
\]%
and it follows in particular that%
\[
f(.,v)\in L^{\infty }((0,T^{\ast });L^{n}(\Omega ))\text{.}
\]

By the regularizing effect of the heat equation \cite{H, HK} we conclude that%
\[
u\in L^{\infty }((0,T^{\ast });\text{ }L^{\infty }(\Omega ))\text{.}
\]

Finally the solutions of the system $(1.1)-(1.3)$~are global and
uniformly bounded on $(0,+\infty )\times \Omega $\ and the corollary
is completely proved.
\end{proof}

\textbf{ACKNOWLEDGEMENTS.} We would like to thank a lot Prof. M.
KIRANE (Universit\'{e} de la Rochelle, FRANCE)for his help and
guidance..

\end{document}